\documentclass[11pt,reqno]{amsart}
\usepackage{graphicx}
\usepackage{float}
\usepackage[caption = false]{subfig}
\usepackage{amsmath}
\usepackage{enumerate}
\usepackage{mathtools}
\usepackage[english]{babel}
\usepackage{amsfonts,amssymb}
\usepackage{mathrsfs}
\usepackage[utf8x]{inputenc}\usepackage[english]{babel}
\usepackage[all]{xy,xypic}
\usepackage{cite}
\numberwithin{equation}{section}
\textwidth=465pt \evensidemargin=0pt \oddsidemargin=0pt
\marginparsep=8pt \marginparpush=8pt \textheight=650pt
\topmargin=-25pt

\setlength{\parskip}{2pt}

\newtheorem{thm}{Theorem}[section]

\newtheorem{lem}[thm]{Lemma}

\theoremstyle{definition}

\theoremstyle{remark}

\numberwithin{equation}{section}
\DeclareMathOperator{\RE}{Re}

\begin{document}
	
	\title[\tiny{The sharp bounds of the second and third Hankel determinants for the class $\mathcal{SL}^*$}]{The sharp bounds of the second and third Hankel determinants for the class $\mathcal{SL}^*$}

		\author[S. Banga]{Shagun Banga}
	\address{Department of Applied Mathematics, Delhi Technological University, Delhi--110042, India}
	\email{shagun05banga@gmail.com}

	\author[S. Sivaprasad Kumar]{S. Sivaprasad Kumar}
	\address{Department of Applied Mathematics, Delhi Technological University, Delhi--110042, India}
	\email{spkumar@dce.ac.in}

	\subjclass[2010]{30C45, 30C50}
	
	\keywords{Hankel determinant, Lemniscate of Bernoulli, Carath\'{e}odory coefficients, Zalcman functional}
\maketitle
		\begin{abstract} The aim of the present paper is to obtain the sharp bounds of the  Hankel determinants $H_2(3)$ and $H_3(1)$ for the well known class $\mathcal{SL}^*$ of starlike functions associated with the right lemniscate of Bernoulli. Further for $n=3$, we find the sharp bound of the Zalcman functional for the class $\mathcal{SL}^*$. In addition, a couple of interesting results of $\mathcal{SL}^*$ is appended at the end.
	\end{abstract}
	
	\section{Introduction}
	\label{intro}
	Let $\mathcal{A}$ be the class of analytic functions $f(z)=z+\sum_{n=2}^{\infty}a_n z^n$, defined in the open unit disk $\Delta$. The subclass $\mathcal{S}$ of $\mathcal{A}$ consists of univalent functions. We say, $f$ is subordinate to $g$, denoted by $f \prec g$, if there exists a Schwartz function $\omega$ with $\omega(0)=0$ and $|\omega(z)|<1$ such that $f(z)=g(\omega(z))$, where $f$ and $g$ are analytic functions. For each $n \geq 2$, Zalcman conjectured the following coefficient inequality for the class $\mathcal{S}$:
	\begin{align}\label{zalc}
	|a_n^2-a_{2n-1}|\leq (n-1)^2.
	\end{align}
	The above inequality also implies the Bieberbach conjecture $|a_n| \leq n$ (see \cite{bieberbach1986}). Consider the class $\mathcal{SL}^*$ \cite{sokol1996}, given by
	\begin{align*}
	\mathcal{SL}^*:=\left\{f \in \mathcal{A}:\dfrac{z f'(z)}{f(z)} \prec \sqrt{1+z}, \quad z \in \Delta\right\}.
	\end{align*}
	It is evident that if $\omega=zf'(z)/f(z)$, then the analytic characterization of the functions in $\mathcal{SL}^*$, is given by $|\omega^2-1| <1$, which in fact is the interior of the right loop of the lemniscate of Bernoulli, with the boundary equation $\gamma_1$ $: (u^2+v^2)^2-2(u^2-v^2)=0$. In 2009, Sok\'{o}\l \text{ }\cite{sokol2009} obtained the sharp bounds for $a_2$, $a_3$ and $a_4$ of functions in the class $\mathcal{SL}^*$, further it is conjectured that $|a_{n+1}|\leq 1/2n$ whenever $n \geq 1$, with the extremal function $f$ satisfying $zf'(z)/f(z)=\sqrt{1+z^n}$. Later, Shelly Verma \cite{shelly2015} gave the proof for the sharp estimate of the fifth coefficient with the extremal function for $\mathcal{SL}^*$ using the characterization of positive real part functions in terms of certain positive semi-definite Hermitian form. Sok\'{o}\l \text{ }\cite{sokol22009} also dealt the radius problems for the class $\mathcal{SL}^*$. Recently, Ali $et$ $al.$ \cite{aliravi2012} have examined the radius of starlikeness associated with the lemniscate of Bernoulli. Some differential subordination results associated with lemniscate of Bernoulli is studied in \cite{ali2012,kumar2013}.
	
	The $q^{th}$ Hankel determinant for a function $f \in \mathcal{A}$, where $q$, $n \in \mathbb{N}$ is defined as follows: 
	\begin{equation}\label{hqn}
	H_q(n) :=
	\begin{vmatrix}
	a_n & a_{n+1} & \ldots & a_{n+q-1}\\
	a_{n+1} & a_{n+2} & \ldots & a_{n+q}\\
	\vdots & \vdots & \ddots & \vdots \\
	a_{n+q-1}& a_{n+q} &\ldots & a_{n+2q-2}
	\end{vmatrix}.
	\end{equation}
	This has been initially studied in \cite{pommerenke1966}. This determinant has also been considered by several authors. It also plays an important role in the study of singularities (see \cite{diens1957}). Noor \cite{noor1983} studied the rate of growth of $H_q(n)$ as $n \rightarrow \infty$ for functions in $\mathcal{S}$  with bounded boundary. The computation of the upper bound of $|H_q(n)|$ for several subclasses of $\mathcal{S}$ has always been a trendy problem in the field of geoemteric function theory. Hayami and Owa \cite{hayami2010} determined the second Hankel determinant $H_2(n)$ $(n=1,2,\ldots)$ for functions $f$ satisfying $ \RE (f(z)/z)> \alpha$ or $ \RE f'(z) > \alpha$ $(0 \leq \alpha <1)$. Recently, Zaprawa \cite{zaprawa2016} obtained the upper bound of $|H_2(n)|$ for the class $T$ of typically real functions. Note that the Hankel determinant $H_2(1):=a_3-a_2^2$ coincides with the famous Fekete-Szeg\"o functional. In the year 1983, Bieberbach \cite{bieberbach1983} estimated the bound of $|H_2(1)|$ for the class $\mathcal{S}$. The generalization of Fekete-Szeg\"o functional is given by $a_3-\mu a_2^2$, where $\mu$ is either real or complex. The computation for the bound of $|H_2(2)|$, where $H_2(2):=a_2 a_4-a_3^2$ requires the formulae of $p_2$ and $p_3$ \cite{libera1982} 
	in terms of $p_1$, where $p_i's$ are the coefficients of the functions in the Carath\'{e}odory class $\mathcal{P}$, defined by:
	\begin{align*}
	p(z)= 1+ \sum_{n=1}^{\infty} p_n z^n \text{ } (z \in \Delta),
	\end{align*}
	with $\RE p(z) >0$ in $\Delta$. Recently, many authors have estimated the  bound of  $|H_2(2)|$ (see \cite{bansal2013,hayami2010,hayman1968,janteng2007,lee2013}). Recall that the second Hankel determinant is given by
	\begin{align}\label{h231}
	H_2(3)= 	\begin{vmatrix}
	a_3 & a_{4}\\
	a_{4} & a_{5}\\
	\end{vmatrix}
	=a_3 a_5-a_4^2.\end{align} Zaprawa \cite{zaprawa2018} investigated the Hankel determinant $H_2(3)$ for several classes of univalent functions. The estimate of the upper bound of the third order Hankel determinant, which is given by
	\begin{align}\label{h3}
	H_3(1)= 	\begin{vmatrix}
	a_1 & a_{2} & a_{3}\\
	a_{2} & a_{3} & a_{4}\\
	a_{3}& a_{4} & a_{5}
	\end{vmatrix}=a_3(a_2 a_4-a_3^2)-a_4(a_4-a_2 a_3)+a_5(a_3-a_2^2),
	\end{align} requires the sharp bounds of the initial coefficients ($a_2$, $a_3$, $a_4$ and $a_5$), Fekete-Szeg\"o functional, second Hankel determinant $H_2(2)$ and the quantity $|a_4-a_2 a_3|=:L$. Using triangle inequality in (\ref{h3}), the upper bound of $|H_3(1)|$ can be obtained as follows:
	\begin{align*}
	|H_3(1)| \leq |a_3||H_2(2)|+|a_4||L|+|a_5||H_2(1)|,
	\end{align*}
	(see \cite{krishna2015,raza2013,sudharshan2014,zaprawa2017}). Note that this computation does not yield sharp bound for $H_3(1)$. It is pertinent to know that the computation of $|H_3(1)|$ and $|H_2(3)|$ is tedious if we desire to obtain the sharp bound. For the class $\mathcal{SL}^*$, Raza and Malik \cite{raza2013} obtained the sharp bounds of $|H_2(1)|$ and $|H_2(2)|$ and the upper bound of $|H_3(1)|$. Thus, the sharp estimate of $|H_3(1)|$ for $\mathcal{SL}^*$ until now is an open problem. The study of the bound for third Hankel determinant has become an interesting problem only after the well known formula of expressing $p_4$ in terms of $p_1$ which was recently obtained in \cite{kwon2018}, which yields the sharp results in most of the cases. Kwon $et$ $al.$ \cite{kwon2019} improved the estimate of the third Hankel determinant for starlike functions. Recently, Kowalczyk $et$ $al.$ \cite{kowalczyk2018} obtained the sharp bound of $|H_3(1)|$ for the class $\mathit{T}(\alpha) :=\{f \in \mathcal{A}:\RE (f(z)/z) > \alpha \text{, }\alpha \in [0,1)\}$ and in \cite{kowalczyk22018} establish the sharp bound of the same for the class of convex functions. Zaprawa \cite{zaprawa2016} estimated the sharp bound of $|H_2(3)|$ for the class of typically real functions. Note that these are the only three (as per the knowlegde of the authors) sharp bounds of $|H_2(3)|$ and $|H_3(1)|$ proved for any subclass of analytic functions till date.
	
	For the class $\mathcal{SL}^*$, the known upper bound for $|H_3(1)|$ is $\tfrac{43}{576}$ (see \cite{raza2013}), whereas in this paper, we obtain a sharp estimate for the same which is equal to $\tfrac{1}{36}$. Further, we find the sharp bound of the second Hankel determinant $H_2(3)$ for the class $\mathcal{SL}^*$. Also, we estimate the sharp bound of the quantity $|a_3^2-a_5|$  for the class $\mathcal{SL}^*$, which is the Zalcman functional, given in (\ref{zalc}), when $n=3$. In the last section, we establish few results pertaining to the sufficient condition for the functions in $\mathcal{S}$ to belong to the class $\mathcal{SL}^*$. 
	
	The following lemmas are required for the formulae of $p_2$, $p_3$ \cite{libera1982} and $p_4$ \cite{kwon2018} in order to establish our main results.
	\begin{lem} \label{p1p2p3}
		Let $p \in \mathcal{P}$ and of the form $1+\sum\limits_{n=1}^{\infty}p_nz^n.$ Then
		\begin{align}\label{p2}
		2p_2=p_1^2+\gamma(4-p_1^2),
		\end{align}
		\begin{align}\label{p3}
		4p_3=p_1^3+2p_1(4-p_1^2)\gamma-p_1(4-p_1^2)\gamma^2+2(4-p_1^2)(1-|\gamma|^2)\eta
		\end{align}
		and
		\begin{align}\label{p4}
		8p_4=&p_1^4+(4-p_1^2)\gamma(p_1^2(\gamma^2-3\gamma+3)+4\gamma)\nonumber\\&-4(4-p_1^2)(1-|\gamma|^2)(p_1(\gamma-1)\eta+\overline{\gamma}\eta^2-(1-|\eta|^2)\rho),
		\end{align}
		for some $\rho$, $\gamma$ and $\eta$ such that $|\rho|\leq1$, $|\gamma|\leq1$ and $|\eta|\leq 1.$
	\end{lem}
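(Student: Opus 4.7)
My plan is to derive all three identities from a single source: the Schur parameterization of Schwarz functions. First I would pass from $p\in\mathcal{P}$ to the associated Schwarz function $\omega$ via the M\"{o}bius transform $p(z) = (1+\omega(z))/(1-\omega(z))$, so that $\omega$ vanishes at $0$ and $|\omega|\le 1$ on $\Delta$. Writing $\omega(z)=\sum_{k\ge 1}c_k z^k$ and expanding $(1-\omega)p=1+\omega$, I compare coefficients to obtain the polynomial identities $p_1=2c_1$, $p_2=2c_1^2+2c_2$, $p_3=2c_1^3+4c_1c_2+2c_3$, and $p_4=2c_1^4+6c_1^2c_2+4c_1c_3+2c_2^2+2c_4$. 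Thus every Carath\'{e}odory coefficient is a polynomial in the Schwarz coefficients, and the task reduces to parameterizing the quadruple $(c_1,c_2,c_3,c_4)$.

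Next I would apply the Schur algorithm three times. Set $\varphi_0(z)=\omega(z)/z$; this maps $\Delta$ into $\overline{\Delta}$ with $\varphi_0(0)=c_1$, so $|c_1|\le 1$. Repeatedly applying the operation $\varphi\mapsto\frac{1}{z}\cdot\frac{\varphi-\varphi(0)}{1-\overline{\varphi(0)}\,\varphi}$ produces self-maps $\varphi_1,\varphi_2,\varphi_3$ of $\overline{\Delta}$; set $\gamma=\varphi_1(0)$, $\eta=\varphi_2(0)$, $\rho=\varphi_3(0)$, each of modulus at most $1$. Expanding the defining identities $z\varphi_{j+1}(z)(1-\overline{\varphi_j(0)}\,\varphi_j(z))=\varphi_j(z)-\varphi_j(0)$ up to orders $z^3$, $z^2$, and $z$ for $j=0,1,2$ respectively, and substituting recursively, one obtains
\begin{align*}
c_2 &= (1-|c_1|^2)\gamma,\\
c_3 &= (1-|c_1|^2)\bigl[(1-|\gamma|^2)\eta-\overline{c_1}\gamma^2\bigr],\\
c_4 &= (1-|c_1|^2)\bigl[(1-|\gamma|^2)\bigl((1-|\eta|^2)\rho-\overline{\gamma}\eta^2\bigr)-2\overline{c_1}\gamma(1-|\gamma|^2)\eta+\overline{c_1}^2\gamma^3\bigr].
\end{align*}

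By the standard rotation $p(z)\mapsto e^{-i\theta}p(e^{i\theta}z)$ I may assume $p_1\in[0,2]$, so that $c_1=p_1/2$ is real and $1-|c_1|^2=(4-p_1^2)/4$. Substituting these expressions for $c_1,c_2,c_3,c_4$ into the polynomial identities of the first paragraph and collecting terms yields (\ref{p2}), (\ref{p3}), and, after recognizing the compact grouping $-4(4-p_1^2)(1-|\gamma|^2)\bigl(p_1(\gamma-1)\eta+\overline{\gamma}\eta^2-(1-|\eta|^2)\rho\bigr)$, the identity (\ref{p4}).

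The principal obstacle is the third Schur step and the $c_4$ expansion: one must track the $z^3$-coefficient of the product $z\varphi_1(z)(1-\overline{c_1}\varphi_0(z))$ while simultaneously expressing the quadratic coefficient of $\varphi_1$ in terms of $\eta$ and $\rho$ via one more Schur step, producing the cross term $-2\overline{c_1}\gamma(1-|\gamma|^2)\eta$ that a naive two-step estimate would miss. The subsequent substitution into $8p_4$ and its regrouping into the form shown in (\ref{p4}) is mechanical but lengthy; no new idea is required once $c_4$ is in the closed form above.
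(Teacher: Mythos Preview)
The paper does not prove this lemma at all; it simply quotes the formulas from Libera--Z\l otkiewicz \cite{libera1982} for (\ref{p2})--(\ref{p3}) and from Kwon--Lecko--Sim \cite{kwon2018} for (\ref{p4}). Your Schur-algorithm derivation is correct and is in fact precisely the method used in those cited sources (in particular, the iterated Schur step producing $c_4$ and hence $p_4$ is exactly the argument of \cite{kwon2018}), so your proposal supplies the proof the paper omits and there is nothing to compare.
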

	\begin{lem}\label{she}\cite{shelly2015}
		Let $a$, $b$, $c$ and $d$ satisfy the inequalities $0<c<1$, $0<d<1$ and
		\begin{align*}
		8d(1-d)((cb-2a)^2+(c(d+c)-b)^2)+c(1-c)(b-2dc)^2 \leq 4c^2(1-c)^2d(1-d).
		\end{align*}
		If $p \in \mathcal{P}$, then
		\begin{align*}
		|ap_1^4+dp_2^2+2cp_1p_3-(3/2)bp_1^2p_2-p_4| \leq 2.
		\end{align*}
	\end{lem}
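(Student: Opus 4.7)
The plan is to invoke the parametric representation of Carathéodory coefficients supplied by Lemma \ref{p1p2p3}. First, by rotation invariance of $\mathcal{P}$ (replacing $p(z)$ by $p(e^{-i\theta}z)$ sends $p_n$ to $e^{-in\theta}p_n$), the expression $ap_1^4 + dp_2^2 + 2cp_1p_3 - \tfrac{3}{2}bp_1^2p_2 - p_4$ is multiplied by the unimodular factor $e^{-4i\theta}$, so its modulus is invariant. Hence we may assume $p_1 = t \in [0,2]$. Substituting \eqref{p2}, \eqref{p3} and \eqref{p4} transforms the expression into a polynomial in $(t,\gamma,\bar\gamma,\eta,\bar\eta,\rho)$ with $t\in[0,2]$ and $|\gamma|,|\eta|,|\rho|\le 1$.

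The key structural observation is that $\rho$ enters only through $p_4$, and only linearly, with coefficient $-\tfrac{1}{2}(4-t^2)(1-|\gamma|^2)(1-|\eta|^2)$. Isolating this term and using $|\rho|\le 1$ with the triangle inequality yields
\[
\bigl|ap_1^4 + dp_2^2 + 2cp_1p_3 - \tfrac{3}{2}bp_1^2p_2 - p_4\bigr| \le |\Psi(t,\gamma,\eta)| + \tfrac{1}{2}(4-t^2)(1-|\gamma|^2)(1-|\eta|^2),
\]
where $\Psi$ collects the $\rho$-independent terms. Since $\Psi$ is of degree two in $\eta$ (the $\eta^2$-term coming from $\bar\gamma\eta^2$ in \eqref{p4} and the linear term from \eqref{p3} and \eqref{p4}), writing $\gamma = ue^{i\alpha}$ and $\eta = ve^{i\beta}$ with $u,v\in[0,1]$ and maximizing $|\Psi|$ successively over the phases $\beta$ and $\alpha$ (a routine trigonometric optimization) reduces the task to proving a polynomial inequality $G(t,u,v) \le 2$ on the box $[0,2]\times[0,1]^2$.

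The hypothesis $8d(1-d)((cb-2a)^2+(c(d+c)-b)^2)+c(1-c)(b-2dc)^2 \le 4c^2(1-c)^2d(1-d)$ enters at this last step as a completed-square condition: the three quantities $cb-2a$, $c(d+c)-b$ and $b-2dc$ are precisely the coefficients of specific monomials in $t^2$ and $4-t^2$ that appear when one reorganizes $G$, and the stated inequality is exactly what is needed to realize $2 - G(t,u,v)$ as a non-negative combination involving squares times $(4-t^2)$, $u(1-u)$ and $v(1-v)$. The principal obstacle is this algebraic bookkeeping: the expanded form of $\Psi$ contains many monomials, and recognizing the sum-of-squares structure that matches the hypothesis requires disciplined regrouping. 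Finally, the degenerate cases $t=2$, $|\gamma|=1$ and $|\eta|=1$, where one or more of the parameters in Lemma \ref{p1p2p3} ceases to be free, must be verified separately, but each collapses to a comparatively simple lower-dimensional inequality.
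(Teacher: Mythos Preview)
The paper does not prove Lemma~\ref{she}; it is quoted as a cited result from Ravichandran--Verma \cite{shelly2015}, so there is no in-paper argument to compare your proposal against. The introduction does hint at the method of \cite{shelly2015}: ``the characterization of positive real part functions in terms of certain positive semi-definite Hermitian form,'' i.e., the Carath\'eodory--Toeplitz criterion for $(p_1,\dots,p_4)$ to be the initial coefficients of some $p\in\mathcal{P}$. That is a structurally different route from the Libera--Z\l otkiewicz / Kwon--Lecko--Sim parametrization of Lemma~\ref{p1p2p3} that you invoke; the Hermitian-form approach tends to produce the quadratic constraint on $(a,b,c,d)$ directly as a positive-semidefiniteness condition, whereas your approach must recover it at the end as a sum-of-squares certificate.

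As a self-contained argument, your proposal is a plausible outline but not yet a proof. Two steps are asserted rather than executed, and they carry essentially all the content. First, the ``routine trigonometric optimization'' over the phases of $\gamma$ and $\eta$ is not obviously routine: the term $\bar\gamma\eta^2$ in \eqref{p4} couples the two phases, so the maxima in $\alpha$ and $\beta$ do not separate cleanly, and one must argue carefully why the supremum over phases lands on the real configuration (or otherwise bound the cross terms). Second, the claim that the hypothesis on $a,b,c,d$ is ``exactly'' the completed-square condition needed for $2-G(t,u,v)\ge 0$ is the heart of the lemma and is only asserted; you would need to display the explicit reorganization of $G$ that exhibits $cb-2a$, $c(d+c)-b$ and $b-2dc$ as the relevant coefficients and verify the nonnegativity. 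Until those two computations are written out, what you have is a strategy sketch rather than a proof.
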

	
	\section{Main Results}
	\label{sec:2}
	We proceed with the following theorem.
	\begin{thm}\label{thm1}
		If $f \in \mathcal{SL}^*.$ Then we have
		\begin{align}\label{bound}
		|H_3(1)| \leq 1/36.
		\end{align}
		The bound is sharp.
	\end{thm}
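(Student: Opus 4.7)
The plan is to translate the subordination $zf'(z)/f(z)\prec\sqrt{1+z}$ into a parametrisation via $p\in\mathcal{P}$, express $a_2,a_3,a_4,a_5$ as polynomials in $p_1,p_2,p_3,p_4$, substitute into $H_3(1)$, and then use Lemma~\ref{p1p2p3} to bound the resulting expression. Concretely, $f\in\mathcal{SL}^*$ implies the existence of a Schwarz function $w$ with $zf'(z)/f(z)=\sqrt{1+w(z)}$, and setting $w=(p-1)/(p+1)$ with $p\in\mathcal{P}$ gives
\[
\Bigl(\frac{zf'(z)}{f(z)}\Bigr)^{\!2}=\frac{2p(z)}{p(z)+1}.
\]
Matching power series coefficients on both sides yields closed formulas for $a_j$, $j=2,3,4,5$, as polynomials in $p_1,\dots,p_{j-1}$ with rational coefficients, which I would compute by routine recursion on the identity $zf'(z)=f(z)\sqrt{1+w(z)}$.

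Substituting these expressions into $H_3(1)=a_3a_5-a_4^2-a_2^2a_5+2a_2a_3a_4-a_3^3$ produces a polynomial $P(p_1,p_2,p_3,p_4)$, homogeneous of weight six (when $p_j$ is assigned weight $j$). Since $\mathcal{SL}^*$ is preserved by the rotation $f(z)\mapsto e^{-i\theta}f(e^{i\theta}z)$, which sends $p_1\mapsto e^{i\theta}p_1$, I may assume without loss of generality that $p_1=:c\in[0,2]$. I would then invoke Lemma~\ref{p1p2p3} to replace $p_2,p_3,p_4$ by their parametrisations in terms of $c,\gamma,\eta,\rho$, where $|\gamma|,|\eta|,|\rho|\le 1$, turning $P$ into a function $P(c,\gamma,\eta,\rho)$.

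The bound would then be extracted by a staged application of the triangle inequality. The parameter $\rho$ enters $P$ linearly (through the $\rho$-term of \eqref{p4}), so $|\rho|\le 1$ collapses the expression to a polynomial in $c,\gamma,\eta$; next $\eta$ appears only up to degree two, allowing its maximum over $|\eta|\le 1$ to be read off explicitly. What remains is a function $\Phi(c,t)$ on $[0,2]\times[0,1]$, with $t=|\gamma|$, which I would show satisfies $\Phi(c,t)\le 1/36$ by locating interior critical points and inspecting the boundary. The main obstacle is the bookkeeping: the algebra after substitution is dense and one must group terms strategically before each triangle-inequality step, lest the coarse bound overshoot $1/36$. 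A useful short-cut is to recognise a sub-block of $P$ that matches the template of Lemma~\ref{she} with an appropriate choice of $a,b,c,d$, so that a portion of the estimation can be replaced by the single inequality $|\cdot|\le 2$, leaving only a simpler residual term to control.

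For sharpness, consider $f_0\in\mathcal{SL}^*$ determined by $zf_0'(z)/f_0(z)=\sqrt{1+z^3}$. A direct series expansion gives $f_0(z)=z+\tfrac{1}{6}z^4+\cdots$, so $a_2=a_3=a_5=0$ and $a_4=1/6$, whence $H_3(1)(f_0)=-a_4^2=-1/36$, showing that $1/36$ is attained and the bound in \eqref{bound} is therefore best possible.
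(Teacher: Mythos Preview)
Your outline follows essentially the same strategy as the paper: express $a_2,\dots,a_5$ in terms of the Carath\'{e}odory coefficients, substitute into $H_3(1)$, invoke Lemma~\ref{p1p2p3} to parametrise by $(p,\gamma,\eta,\rho)$, apply the triangle inequality in stages, and then maximise the resulting real-variable function; the sharpness witness $zf_0'(z)/f_0(z)=\sqrt{1+z^3}$ is exactly the paper's.

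Two remarks on the execution. First, the paper does \emph{not} collapse to a two-variable function $\Phi(c,t)$: after using $|\rho|\le1$ and setting $x=|\gamma|$, $y=|\eta|$, it retains a three-variable majorant $G(p,x,y)$ on the cuboid $[0,2]\times[0,1]\times[0,1]$ and carries out a full interior/faces/edges analysis. The coefficient of $y^2$ in $G$ (namely $g_3-g_4$) changes sign over the base square, so your proposed elimination of $y$ would yield a piecewise-defined $\Phi$ and is not obviously cleaner than the cuboid argument. Second, your suggested short-cut via Lemma~\ref{she} is unlikely to help here: that lemma controls a weight-$4$ combination $ap_1^4+dp_2^2+2cp_1p_3-\tfrac32 bp_1^2p_2-p_4$, whereas $H_3(1)$ is homogeneous of weight~$6$ in the $p_j$, so no sub-block of the required shape naturally splits off; indeed the paper reserves Lemma~\ref{she} for the Zalcman estimate $|a_3^2-a_5|\le 1/8$ and handles $H_3(1)$ purely through Lemma~\ref{p1p2p3} and the cuboid maximisation.
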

	\begin{proof}
		Let $f \in \mathcal{SL}^*$ then from \cite[p. 509]{shelly2015}, we have
		\begin{align}\label{a2}
		a_2=\dfrac{p_1}{4}, \quad a_3=\dfrac{1}{8}p_2-\dfrac{3}{64}p_1^2, \quad a_4=\dfrac{1}{12}p_3-\dfrac{7}{96} p_1 p_2+\dfrac{13}{768}p_1^3\end{align}
		and
		\begin{align}\label{a5}
		a_5=-\dfrac{1}{16}\left(\dfrac{49}{384}p_1^4-\dfrac{17}{24} p_1^2p_2+\dfrac{1}{2}p_2^2+\dfrac{11}{12}p_1p_3-p_4\right).\end{align}
		On simplifying the equation (\ref{h3}), we get
		\begin{align}\label{1}
		H_3(1)= 2a_2 a_3a_4-a_3^3-a_4^2+a_3a_5-a_2^2a_5.\end{align}
		Since the class $\mathcal{P}$ is invariant under the rotation, the value of $p_1$ lies in the interval $[0,2]$. Let $p:=p_1$ and  substituting the above values of $a_i's$ in (\ref{1}), we have
		\begin{align*}
		H_3(1)=&\dfrac{1}{2359296}\bigg(689 p^6 - 3368 p^4 p_2 + 3520 p^3 p_3 + 24064 p p_2 p_3 + 3008p^2 p_2^2\\& - 16128 p^2 p_4 - 13824 p_2^3 - 16384 p_3^2 +18432 p_2 p_4\bigg).
		\end{align*}
		Using the equalities (\ref{p2})-(\ref{p4}) and upon simplification, we arrive at
		\begin{align*}
		H_3(1)=\dfrac{1}{2359296}\bigg(\nu_1(p,\gamma)+\nu_2(p,\gamma)\eta+\nu_3(p,\gamma)\eta^2+\psi(p,\gamma,\eta)\rho\bigg).
		\end{align*}
		Where $\rho$, $\eta$, $\gamma$ $\in \overline{\Delta},$
		\begin{align*}
		&\nu_1(p,\gamma):=29p^6+ (4-p^2)((4-p^2)(944p^2\gamma^2-640p^2\gamma^3-2304\gamma^3+128p^2\gamma^4)\\& \qquad \qquad \quad-116p^4\gamma+752p^4\gamma^2 -3456p^2\gamma^2-864p^4\gamma^3),\\&\nu_2(p,\gamma):=(4-p^2)(1-|\gamma|^2)\left(224 p^3+3456p^3\gamma+(4-p^2)(2432  p\gamma-512p \gamma^2)\right),\\& \nu_3(p,\gamma):=(4-p^2)(1-|\gamma|^2)\left((4-p^2)(4096-512|\gamma|^2)+3456p^2\overline{\gamma}\right),\\
		&\psi(p,\gamma,\eta):= (4-p^2)(1-|\gamma|^2)(1-|\eta|^2)\left(-3456p^2+4608\gamma(4-p^2)\right).
		\end{align*}
		Further, by taking $x:=|\gamma|$, $y:=|\eta|$ and using the fact $|\rho|\leq 1$, we have
		\begin{align*}
		|H_3(1)| &\leq \dfrac{1}{2359296}\bigg(|\nu_1(p,\gamma)|+|\nu_2(p,\gamma)|y+|\nu_3(p,\gamma)|y^2+|\psi(p,\gamma,\eta)|\bigg)\\& \leq G(p,x,y),
		\end{align*}
		where
		\begin{align}\label{G}
		G(p,x,y):=\dfrac{1}{2359296}\bigg(g_1(p,x)+g_2(p,x)y+g_3(p,x)y^2+g_4(p,x)(1-y^2)\bigg)\end{align} with
		\begin{align*}
		&g_1(p,x):=29 p^6+(4-p^2)((4-p^2)(944p^2x^2+640 p^2 x^3+2304x^3+128p^2x^4)\\& \qquad \qquad \quad +116p^4x+752p^4x^2+3456p^2x^2+864p^4x^3),\\&g_2(p,x):=(4-p^2)(1-x^2)(224p^3+(4-p^2)(2432px+512px^2)+3456p^3x),\\&g_3(p,x):=(4-p^2)(1-x^2)((4-p^2)(4096+512x^2)+3456p^2x),\\ &g_4(p,x):=(4-p^2)(1-x^2)(3456p^2+4608x(4-p^2)).\end{align*}
		Now we need to maximize $G(p,x,y)$ in the closed cuboid $S: [0,2] \times [0,1] \times [0,1]$. We establish this by finding the maximum values in the interior of the six faces, on the twelve edges  and in the interior of  $S$.\\
		\indent I. First we proceed with interior points of $S.$ Let $(p,x,y) \in (0,2)\times(0,1)\times(0,1)$. In an attempt to find the points where the maximum value is attained in the interior of $S$, we partially differentiate equation (\ref{G}) with respect to $y$ and on algebraic simplification, we get
		\begin{align*}
		\dfrac{\partial G}{\partial y}=&\dfrac{1}{73728}(4-p^2)(1-x^2)(8y(x-1)(4(4-p^2)(x-8)+27 p^2)\\&+p(4x(4-p^2)(19+4x)+p^2(7+108x))).
		\end{align*}
		Now $\tfrac{\partial G}{\partial y}=0$ yields
		\begin{align*}
		y=\dfrac{p(4x(4-p^2)(19+4x)+p^2(7+108x))}{4(x-1)(4(4-p^2)(8-x)-27p^2)}=:y_0.
		\end{align*}
		For the existence of the critical points, $y_0$ should lie in the interval $(0,1)$, which is possible only when
		\begin{align}\label{cond}
		p^3(7+108x)+4px(4-p^2)(19+4x)+32(1-x)(8-x)(4-p^2) < 216 p^2(1-x)
		\end{align}
		and
		\begin{align}\label{cond2}
		27p^2 > 4(4-p^2)(8-x). 
		\end{align}
		Now, we find the solutions satisfying both the inequalities (\ref{cond}) and (\ref{cond2}) for the existence of critical points. Let $g(x):= 16(8-x)/(59-4x),$ which is decreasing function of $x$ as $g'(x)$ is negative for $x \in (0,1)$. Hence $\min g(x)_{(x=1)} =112/55$. Thus from equation (\ref{cond2}), we can conclude that $p >1$ for all $x \in (0,1)$. But for $p\geq 1$, the inequality (\ref{cond}) does not hold as it is not difficult to see that $7 p^3\geq 216p^2(1-x)$ for all $x$. This shows that there does not exist any solution satisfying both the inequalities (\ref{cond}) and (\ref{cond2}). Hence the function $G$ has no critical point in $(0,2) \times (0,1) \times (0,1)$. \\
		\indent II. Here we consider the interior of all the six faces of the cuboid $S$.\\
		
		On the face $p=0,$ $G(p,x,y)$ reduces to
		\begin{align}\label{f4}
		h_1(x,y):= G(0,x,y)=\dfrac{2(1-x^2)(y^2(x-1)(x-8)+9x)+9x^3}{576}, \quad x, y \in (0,1).
		\end{align}
		We note that $h_1$ has no critical point in $(0,1)\times (0,1)$ since
		\begin{align}\label{part}
		\dfrac{\partial h_1}{\partial y} =\dfrac{ y(1-x^2)(x-1)(x-8)}{144} \neq 0, \quad x, y \in (0,1).
		\end{align}
		\indent On the face $p=2,$ $G(p,x,y)$ reduces to
		\begin{align}\label{f3}
		G(2,x,y)= \dfrac{29}{36864}, \quad x,y \in (0,1).
		\end{align}
		\indent On the face $x=0$, $G(p,x,y)$ reduces to $G(p,0,y)$, given by
		\begin{align}\label{f1}
	h_2(p,y) :=	&\dfrac{128 y^2(512 - 364 p^2 + 59 p^4)+224p^3y(4-p^2)+13824 p^2 - 3456 p^4+29p^6}{2359296},
		\end{align}
		where $p \in (0,2) \text{ and } y \in (0,1)$. We solve $\tfrac{\partial h_2}{\partial y}=0$ and $\tfrac{\partial h_2}{\partial p}=0$ to determine the points where the maxima occur. On solving $\tfrac{\partial h_2}{\partial y}=0$, we get
		\begin{align}\label{p}
		y=-\dfrac{7 p^3}{8(128-59 p^2)}=:y_1.
		\end{align}
		For the given range of $y$, we should have $y_1 \in (0,1)$, which is possible only if $ p > p_0$, $p_0 \approx 1.47292$.
		A computation shows that $\tfrac{\partial h_2}{\partial p}=0$ implies
		\begin{align}\label{f}
		256 y^2(-182 + 59 p^2)-112 y(-12 p + 5 p^3)+87 p^4-6912p^2+13824=0.
		\end{align}
		Substituting equation (\ref{p}) in equation (\ref{f}) and upon simplification, we get
		\begin{align}\label{psol}
		75497472-107347968p^2+51265024p^4-8426096p^6+95167p^8=0.
		\end{align}
		A numerical computation shows that the solution of (\ref{psol}) in the interval $(0,2)$ is $p \approx 1.39732 $. Thus $h_2$ has no critical point in $(0,2) \times (0,1)$.\\
		\indent On the face $x=1,$ $G(p,x,y)$ reduces to
		\begin{align}\label{f2}
		h_3(p,y):= G(p,1,y)= \dfrac{36864 + 22784 p^2 - 7920 p^4 + 9 p^6}{2359296}, \quad p \in (0,2).\end{align}
		Solving $\tfrac{\partial h_3}{\partial p}=0$, we get a critical point at $p=:p_0 \approx 1.2008$. A Simple calculation shows that $h_3$ attains its maximum value $\approx 0.0225817$ at $p_{0}$.\\
		\indent On the face $y=0$, $G(p,x,y)$ reduces to
		\begin{align*}
	h_4(p,x) :=G(p,x,0)&= \dfrac{1}{2359296}\bigg(29 p^6+(4-p^2)((4-p^2)(944 p^2 x^2+640 p^2 x^3-2304x^3\\&\quad+128 p^2 x^4+4608 x)+116 p^4 x+752 p^4 x^2 +864 p^4 x^3+3456 p^2x^2\bigg).
		\end{align*}
		A computation shows that
		\begin{align*}
		\dfrac{\partial h_4}{\partial x}=&\dfrac{1}{2359296}\bigg((8192p^2 - 576p^4  + 512p^6) x^3+(30720 p^2  - 4992 p^4 - 672 p^6) x^2\\&+(30208 p^2  -9088 p^4 + 384p^6) x+73728 - 36864p^2 + 5072 p^4 - 116 p^6\bigg).
		\end{align*}
		and
		\begin{align*}
		\dfrac{\partial h_4}{\partial p}=&\dfrac{1}{2359296}\bigg((4096 p - 4096p^3 +768p^5)x^4+(3840 p -6656 p^3  - 1344 p^5) x^3\\&+(30208 p- 18176 p^3+ 1152p^5) x^2+(-73728p + 20288 p^3  -696 p^5) x\\&+1344p - 13824 p^3+ 174 p^5\bigg).
		\end{align*}
		A numerical computation shows that there does not exist any solution for the system of equations $\tfrac{\partial h_4}{\partial x}=0$ and $\tfrac{\partial h_4}{\partial p}=0$ in $(0,2) \times (0,1)$.\\
		\indent On the face $y=1$, $G(p,x,y)$ reduces to
		\begin{align*}
		G(p,x,1)&= \dfrac{1}{2359296}\bigg(29 p^6+(4-p^2)(116 p^4 x+752 p^4 x^2+3456p^2x^2+864 p^4x^3\\&\quad+(1-x^2)(224p^3+3456p^2x+3456p^3x)+(4-p^2)((1-x^2)(2432px\\&\quad+512px^2+4096+512x^2)+944 p^2 x^2+640 p^2 x^3 +2304 x^3+128 p^2 x^4))\bigg)\\&=:h_6(p,x).
		\end{align*}
		Proceeding on the similar lines as in the previous case for face $y=0$, again there is no solution for the system of equations $\tfrac{\partial h_6}{\partial x}=0$ and $\tfrac{\partial h_6}{\partial p}=0$ in $(0,2) \times (0,1)$.\\
		\indent III. Now we calculate the maximum values achieved by $G(p,x,y)$ on the edges of the cuboid $S$.\\
		Considering the equation (\ref{f1}), we have $G(p,0,0)=:s_1(p)=(29p^6-3456p^4+13824p^2)/2359296$. It is easy to verify that the function $s_1'(p)=0$ for $p=:\lambda_0=0$ and $p=:\lambda_1 \approx 1.43285$ in the interval $[0,2]$. We observe that $\lambda_0$ is the point of minima and the maximum value of $s_1(p)$ is $\approx 0.00596162$, which  is attained at $\lambda_1$. Hence
		\begin{align*}
		G(p,0,0) \leq 0.00596162, \quad p \in [0,2].
		\end{align*}
		Evaluating the equation (\ref{f1}) at $y=1$, we obtain $G(p,0,1)= s_2(p):= (65536 -32768 p^2 + 896 p^3 +4096 p^4 - 224 p^5+ 29 p^6)/2359296$. It is easy to verify that $s_2'(p)$ is decreasing  function in $[0,2]$ and hence attains its maximum value at $p=0$.Thus
		\begin{align*}
		G(p,0,1) \leq \dfrac{1}{36}, \quad p \in [0,2].
		\end{align*}
		In view of the equation (\ref{f1}) and by straightforward computation the maximum value of $G(0,0,y)$ is attained at $y=1$. This implies
		\begin{align*}
		G(0,0,y) \leq \dfrac{1}{36}, \quad y \in [0,1].
		\end{align*}
		As the equation (\ref{f2}) is independent of $x$, we have $G(p,1,1)=G(p,1,0)=s_3(p):=(9p^6-7920 p^4+22784 p^2+36864)/2359296$. Now, $s_3'(p)=45568 p-31680 p^3 + 54 p^5=0$ for $p=:\lambda_2=0$ and $p=:\lambda_3 \approx 1.2008$ in the interval $[0,2]$, where $\lambda_2$ is a point of minima and $s_3(p)$ attains its maximum value at $\lambda_3$. We can conclude that
		\begin{align*}
		G(p,1,1)=G(p,1,0)\leq 0.0225817, \quad p \in [0,2].
		\end{align*}Substituting $p=0$ in equation (\ref{f2}), we obtain $G(0,1,y)=1/64$. The Equation (\ref{f3}) is independent of all the variables $p$, $x$ and $y$. Thus the value of $G(p,x,y)$ on the edges $p=2$, $x=1$; $p=2$, $x=0$; $p=2$, $y=0$ and $p=2$, $y=1$, respectively, is given by
		$$G(2,1,y)=G(2,0,y)=G(2,x,0)=G(2,x,1)=29/36864, x,y \in [0,1].$$ Equation (\ref{f1}), yields $G(0,0,y)= y^2/36.$ A simple computation shows that $$ G(0,0,y) \leq \dfrac{1}{36}, \quad y \in [0,1].$$
		Using equation (\ref{f4}), we get $G(0,x,1)=:s_4(x)=(16-4x^2+9x^3-2x^4)/576$. A simple computation shows that the function $s_4$ is decreasing in $[0,1]$ and hence attains its maximum value at $x=0$. Thus $$G(0,x,1) \leq \dfrac{1}{36}, \quad x \in [0,1].$$
		Once again, by using the equation (\ref{f4}), we get $G(0,x,0)=s_5'(x):=-(x^2-2)/64.$ Performing a simple calculation, we get $s_5'(x)=0$ for $x=:x_0=\sqrt{2}/\sqrt{3}$ and for $0 \leq x<x_0,$ $s_5$ is an increasing function and for $x_0<x\leq 1$, it's a decreasing function. Thus, it attains maximum value at $x_0$. Hence
		\begin{align*}
		G(0,x,0) \leq 0.0170103, \quad x \in [0,1].
		\end{align*}
		\indent In view of the cases I-III, the inequality (\ref{bound}) holds. Let the function $f : \Delta \rightarrow \mathbb{C}$ be as follows
		\begin{align}\label{fn}
		f(z)=z \exp\left(\int_{0}^{z}\dfrac{\sqrt{1+t^3}-1}{t}dt
		\right)=z+\dfrac{z^4}{6}+\cdots.\end{align}The sharpness of the bound $|H_3(1)|$ is justified by the extremal function $f$ given by (\ref{fn}), which belongs to the class $\mathcal{SL}^*$. For this function $f$, we have $a_2=a_3=a_5=0$ and $a_4=1/6$, which clearly shows that $|H_3(1)|= 1/36$ using equation (\ref{1}). This completes the proof.\qedhere
	\end{proof}
	
	We now estimate the bound for the Hankel determinant $H_2(3).$
	\begin{thm}\label{thm2}
		Let $f \in \mathcal{SL}^*$. Then we have
		\begin{align}\label{h23}
		|H_2(3)| \leq \dfrac{1}{36}.
		\end{align}
		The result is sharp.
	\end{thm}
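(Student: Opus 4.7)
The plan is to imitate the structure used in the proof of Theorem \ref{thm1}. First I would substitute the expressions (\ref{a2}) and (\ref{a5}) for $a_3$, $a_4$, $a_5$ directly into
\begin{align*}
H_2(3)=a_3 a_5-a_4^2
\end{align*}
to obtain a polynomial identity of $H_2(3)$ in $p_1,p_2,p_3,p_4$. Writing $p:=p_1\in[0,2]$ (permissible since $\mathcal{P}$ is rotationally invariant), I then apply Lemma \ref{p1p2p3} to substitute for $p_2,p_3,p_4$ in terms of $p,\gamma,\eta,\rho$, where $|\gamma|,|\eta|,|\rho|\le 1$. Collecting terms by the powers of $\rho$ and $\eta$, I expect an expression of the form
\begin{align*}
H_2(3)=\frac{1}{C}\bigl(\tilde\nu_1(p,\gamma)+\tilde\nu_2(p,\gamma)\eta+\tilde\nu_3(p,\gamma)\eta^2+\tilde\psi(p,\gamma,\eta)\rho\bigr)
\end{align*}
for some normalising constant $C$, with the same structural features as the analogous formula in Theorem \ref{thm1}: each $\tilde\nu_i$ is a polynomial in $p$ and $\gamma$, and $\tilde\psi$ carries the factor $(4-p^2)(1-|\gamma|^2)(1-|\eta|^2)$.

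Next, using $|\rho|\le 1$ and writing $x:=|\gamma|$, $y:=|\eta|$, I would bound $|H_2(3)|$ by a real function $\widetilde G(p,x,y)$ obtained by replacing each coefficient $\tilde\nu_i(p,\gamma)$ with the polynomial $\tilde g_i(p,x)$ whose coefficients are the absolute values of those of $\tilde\nu_i$. The problem then reduces to showing $\widetilde G(p,x,y)\le 1/36$ on the closed cuboid $S=[0,2]\times[0,1]\times[0,1]$. Following the blueprint of Theorem \ref{thm1}, I would perform the maximisation by inspecting (i) the interior of $S$ via the system $\partial_p\widetilde G=\partial_x\widetilde G=\partial_y\widetilde G=0$, (ii) the interiors of the six faces $p\in\{0,2\}$, $x\in\{0,1\}$, $y\in\{0,1\}$, and (iii) the twelve bounding edges. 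I expect that, as before, the equation $\partial_y \widetilde G=0$ can be solved explicitly for $y$ and then the resulting $y_0$ will fail to lie in $(0,1)$ under a pair of compatible inequalities in $(p,x)$, killing all interior critical points; the face and edge analysis then reduces to one-variable polynomial optimisations.

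For sharpness, the same extremal function
\begin{align*}
f(z)=z\exp\!\left(\int_0^z\frac{\sqrt{1+t^3}-1}{t}\,dt\right)=z+\frac{z^4}{6}+\cdots
\end{align*}
from (\ref{fn}) works: since $a_2=a_3=a_5=0$ and $a_4=1/6$, we get $|H_2(3)|=|a_3 a_5-a_4^2|=1/36$, matching the upper bound. The main obstacle I anticipate is the same as in Theorem \ref{thm1}: the algebraic bookkeeping needed to write down $\tilde\nu_i$, $\tilde\psi$ and the bounding polynomials $\tilde g_i$ is lengthy, and the case-by-case verification on the six faces and twelve edges requires a handful of numerical solutions to low-degree polynomial equations to rule out spurious local maxima. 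In particular, the faces $x\in\{0,1\}$ (where $\widetilde G$ becomes a polynomial in $p$ and $y$ only) typically produce the largest sub-maximum candidates and must be handled carefully to confirm none exceeds $1/36$.
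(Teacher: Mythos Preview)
Your proposal is correct and mirrors the paper's proof almost step for step: the paper substitutes (\ref{a2})--(\ref{a5}) into $H_2(3)=a_3a_5-a_4^2$, applies Lemma~\ref{p1p2p3} to reach an expression $\tfrac{1}{1179648}(\zeta_1+\zeta_2\eta+\zeta_3\eta^2+\xi\rho)$ with exactly the structural features you describe, bounds by a function $F(p,x,y)$ on $[0,2]\times[0,1]\times[0,1]$, rules out interior critical points via the incompatibility of the two inequalities arising from $\partial_y F=0$, and then checks the six faces and twelve edges, with the maximum $1/36$ attained on the edges $p=0$, $x=0$ (at $y=1$) and sharpness witnessed by the function (\ref{fn}). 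The only refinement worth noting is that the decisive boundary case is the edge $p=x=0$ rather than the faces $x\in\{0,1\}$ you flagged.
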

	\begin{proof}
		We proceed here on the similar lines as in the proof of Theorem~\ref{thm1}. Now, substituting the equalities (\ref{a2})-(\ref{a5}) in (\ref{h231}) and with the assumption $p_1 =: p \in [0,2]$, we get
		\begin{align}\label{exp}
		H_2(3)= \dfrac{1}{1179648}\bigg(&103 p^6 -712 p^4 p_2 -4608 p_2^3+1984 p^2 p_2^2 +5888 p p_2 p_3\nonumber\\&-160 p^3 p_3 -8192 p_3^2 -3456 p^2 p_4 +9216 p_2 p_4\bigg).	\end{align}
		Using the equalities (\ref{p2})-(\ref{p4}) and simplifying the terms in the expression (\ref{exp}), we get
		\begin{align*}
		H_2(3)= \dfrac{1}{1179648}\bigg(\zeta_1(p,\gamma)+\zeta_2(p,\gamma)\eta +\zeta_3(p,\gamma) \eta^2 + \xi (p,\gamma,\eta)\rho\bigg),
		\end{align*}
		where $\rho$, $\eta$ and $\gamma \in \overline{\Delta},$
		\begin{align*}
		&\zeta_1(p,\gamma):= -5p^6+4p^2\gamma(4-p^2)(-p^2-20(4-p^2)\gamma-26p^2\gamma+144\gamma+36p^2\gamma^2\\&\qquad \qquad \quad +16\gamma^2(4-p^2)+40\gamma^2(4-p^2)),\\&
		\zeta_2(p,\gamma):=16p(4-p^2)(1-|\gamma|^2)(-5 p^2-36 p^2\gamma-16\gamma^2(4-p^2)-20\gamma(4-p^2)),\\&
		\zeta_3(p,\gamma):= 64(4-p^2)(1-|\gamma|^2)(-4(4-p^2)(8+\gamma^2)-9p^2\gamma),\\&
		\xi(p,\gamma,\eta):=576(4-p^2)(1-|\gamma|^2)(1-|\eta|^2)(p^2+4\gamma(4-p^2)).
		\end{align*}
		By taking $x:=|\gamma|$, $y:=|\eta|$ and using the fact $|\rho| \leq 1$, we get
		\begin{align*}
		|H_2(3)|& \leq\dfrac{1}{1179648} \bigg(|\zeta_1(p,\gamma)|+|\zeta_2(p,\gamma)|y+|\zeta_3(p,\gamma)|y^2+|\xi(p,\gamma,\eta)|\bigg)\\&\leq F(p,x,y),
		\end{align*}
		where
		\begin{align}\label{F}
		F(p,x,y):=\dfrac{1}{1179648}\bigg(q_1(p,x)+q_2(p,x)y+q_3(p,x)y^2+q_4(p,x)(1-y^2)\bigg)
		\end{align}
		with
		\begin{align*}
		&q_1(p,x):= 5p^6+4p^2x(4-p^2)(p^2+20 (4-p^2)x+26p^2x+144x+36p^2x^2\\&\qquad \qquad \quad +16x^3(4-p^2)+40x^2(4-p^2)),\\ & q_2(p,x):= 16p(4-p^2)(1-x^2)(5p^2+36 p^2x +16 x^2(4-p^2)+20 x(4-p^2)),\\& q_3(p,x):= 64(4-p^2)(1-x^2)(4(4-p^2)(8+x^2)+9p^2x),\\&q_4(p,x):= 576 (4-p^2)(1-x^2)(p^2+4x(4-p^2)).
		\end{align*}
		In order to complete the proof, we need to maximize the function  $F(p,x,y)$ in the closed cuboid $T: [0,2] \times [0,1] \times [0,1]$. For this, we find the maximum values of $F$ in $T$ by considering all the twelve edges, interior of the six faces and in the interior of $T$.\\
		\indent I. We proceed with interior points of $T$. Let us assume $(p,x,y) \in (0,2)\times(0,1)\times(0,1).$ To determine the points where the maximum value occur in the interior of $T$, we partially differentiate equation (\ref{F}) with respect to $y$ and we get
		\begin{align*}
		\dfrac{\partial F}{\partial y} =&\dfrac{1}{73728} (4-p^2)(1-x^2)(8y(x-1)(4(4-p^2)(x-8)+9p^2)\\&+p(4x(4-p^2)(5+4x)+p^2(5+36x))).
		\end{align*}
		Now, $\tfrac{\partial F}{\partial y}=0$ yields
		\begin{align*}
		y= \dfrac{p(4x(4-p^2)(5+4x)+p^2(5+36x))}{8(x-1)(4(4-p^2)(8-x)-9p^2} =: y_1.
		\end{align*}
		Now, $y_1$ should lie in the interval $(0,1)$ for the existence of the critical points. Thus, we have
		\begin{align}\label{c1}
		p^3(5+36x)+4px(4-p^2)(5+4x)+32(1-x)(8-x)(4-p^2) < 72p^2(1-x)
		\end{align}
		and
		\begin{align}\label{c2}
		4(4-p^2)(8-x)< 9 p^2.\end{align}
		We try to find the solutions satisfying  both the inequalities (\ref{c1}) and (\ref{c2}). Let us assume $g(x):= 16(8-x)/(41-4x)$, which is decreasing function of $x$ due to the fact that $g'(x)$ is negative for $ x \in (0,1)$. Therefore $\min r(x)_{(x=1)} = 112/37$. This implies $ p >1$ for all  $x \in (0,1)$ using equation (\ref{c2}). But for $p\geq 1$, the inequality (\ref{c1}) does not hold as $5 p^3\geq 72p^2(1-x)$ for all $x$. Thus we can conclude that there does not exist any solution satisfying  (\ref{c1}) and (\ref{c2}). Thus function $F$ has no critical point in $(0,2)\times(0,1)\times(0,1)$.\\
		\indent II. Now, we consider the interior of all the six faces of the cuboid $T$.\\
		On the face $p=0$,
		\begin{align}\label{g4}
		k_1(x,y):=F(0,x,y)=\dfrac{1-x^2}{288}\bigg(y^2(x-1)(x-8)+9x\bigg), \quad x,y \in (0,1).
		\end{align}
		A simple calculation shows that $\partial k_1/\partial y = \partial h_1/\partial y$. Thus equation (\ref{part}) implies $k_1$ has no critical point in $(0,1)\times (0,1)$.\\
		\indent On the face $p=2$,
		\begin{align}\label{g3}
		F(2,x,y)= \dfrac{5}{18432}, \quad x,y \in (0,1).
		\end{align}
		\indent On the face $x=0$,
		\begin{align}\label{y}
		k_2(p,y):=F(p,0,y)&=\dfrac{64 y^2(512 - 292 p^2 + 41 p^4)+80p^3y(4-p^2)+2304 p^2 -576 p^4+5p^6}{1179648},
		\end{align}
		$p \in (0,2)$ and $y \in (0,1)$. On solving $\tfrac{\partial k_2}{\partial y}=0$, we get
		\begin{align}\label{y1}
		y=\dfrac{5p^3}{8(41p^2-128)}=:y_1.
		\end{align}
		For the given range of $y$, $y_1$ should lie in the interval $(0,1)$, which holds only if $ p > p_0$, $p_0 \approx 1.7669$.
		The computation shows that $\tfrac{\partial k_3}{\partial p}=0$ implies
		\begin{align}\label{k3}
		y^2(5248p^2-18688)+40 y(12 p - 50 p^3)+2304-1152p^2+15 p^4 =0.
		\end{align}
		Let $p>p_0$ and substituting equation (\ref{y1}) in equation (\ref{k3}) and performing lengthy computation, we get
		\begin{align}\label{psol1}
		1048576-1196032 p^2+449216 p^4-57582p^6+615 p^8=0.
		\end{align}
		The numerical computation shows that the solution of (\ref{psol1}) for $p \in (0,2)$ is $p=:p_0 \approx 1.35957$. Thus $k_2$ has no critical point in $(0,2) \times (0,1)$.\\
		\indent On the face $x=1$,
		\begin{align}\label{g1}
		k_3(p):=F(p,1,y)=\dfrac{7168 p^2-2000p^4+57p^6}{1179648}, \quad p \in (0,2).
		\end{align}
		To attain maximum value of $k_3$, we solve $\partial k_3/\partial p=0$ and get critical point at $p=:p_0 \approx 1.39838$. Simple calculation shows that $k_3$ attains its maximum value $\approx 0.00576045$ at $p_0$.\\
		\indent On the face $y=0$,
		\begin{align*}
		F(p,x,0)= &\dfrac{1}{1179648}\bigg(5 p^6	+(4-p^2)((4-p^2)(2304x(1-x^2)+ 80 p^2 x^2\\&+ 160 p^2 x^3 +64 p^2 x^4)+4 p^4 x + 576 p^2 x^2 + 104 p^4 x^2 \\&+ 144 p^4 x^3 +576 p^2 (1-x^2))=:k_4(p,x).
		\end{align*}
		A complex computation shows that
		\begin{align*}
		\dfrac{\partial k_4}{\partial p}= &\dfrac{1}{589824}\bigg(2304p-1152p^3+15p^5+(-18432p+4640 p^3-12 p^5)x\\&+(1280 p-448 p^3-72 p^5)x^2+(20992p-6016 p^3+48 p^5)x^3\\&+(1024 p-1024 p^3+192 p^5)x^4\bigg)
		\end{align*}
		and
		\begin{align*}
		\dfrac{\partial k_4}{\partial x}= &\dfrac{1}{294912}\bigg((p^2-4)((-256 p^2+64 p^4)x^3+(6912-2208 p^2+12 p^4)x^2\\&+(-160 p^2-12 p^4)x-2304+576 p^2-p^4)\bigg).
		\end{align*}
		The numerical computation shows that there does not exist any solution for the system of equations $\tfrac{\partial k_5}{\partial p}=0$ and $\tfrac{\partial k_5}{\partial x}=0$ in $(0,2)\times (0,1)$.\\
		\indent On the face $y=1$,
		\begin{align*}
		F(p,x,1)&=\dfrac{1}{1179648}\bigg(5p^6+(4-p^2)((4-p^2)(80p^2x^2+64p^2x^4+160p^2x^3\\&\quad+(1-x^2)(256px^2+320 px+256(8+x^2)))+4p^4x+104p^4x^2\\&\quad+576p^2x^2+144p^4x^3+(1-x^2)(80p^3+576p^3x+576p^2x))\bigg)=:k_5(p,x).\end{align*}
		Proceeding on the similar lines as in the previous case on the face $y=0$, again, the system of equations $\partial k_5/\partial p=0$ and $\partial k_5/\partial x=0$ have no solution in $(0,2)\times (0,1)$.\\
		\indent III. We now consider the maximum values attained by $F(p,x,y)$ on the edges of the cuboid $T$:\\
		In view of the equation (\ref{y}), we have $F(p,0,0) = l_1(p):=5p^6-576p^4+2304p^2)/1179648.$ It is easy to compute that $l_1'(p)=0$ for $p=:\lambda_0=0$ and $p=:\lambda_1 \approx 1.43351$ in the interval $[0,2]$, where $\lambda_0$ is the point of minima and $\lambda_1$ is the point of maxima. Hence
		\begin{align*}
		F(p,0,0) \leq 0.00198843, \quad p \in [0,2].
		\end{align*}
		Again, considering the equation (\ref{y}), we obtain $F(p,0,1)= l_2(p):= (32768-16384p^2+320p^3+2048p^4-80p^5+5p^6)/1179648.$ Now, we note that $l_2$ is decreasing function in $[0,2]$ and hence attains its maximum value at $p=0$.Thus, 
		\begin{align*}
		F(p,0,0) \leq \dfrac{1}{36}, \quad p \in [0,2].
		\end{align*}
		Now, we observe that the equation (\ref{g1}) does not depend on the value of y, hence we get $F(p,1,1)=F(p,1,0)=l_3(p):= (7168p^2-2000p^4+57p^6)/1179648.$ It is easy to verify that the function $l_3$ has two critical points at $p=0$ and $p=:\lambda_2 \approx 1.39838$ in the interval $[0,2]$, where the maximum value is attained at $\lambda_2$. Thus
		\begin{align*}
		F(p,0,0)=F(p,1,0) \leq 0.0057645, \quad p \in [0,2].
		\end{align*}
		On substituting $p=0$ in (\ref{g1}), we get $F(0,1,y)=0$.
		In view of equation (\ref{g3}), which is independent of all the variables $p$, $x$ and $y$, the value of $F(p,x,y)$ on the edges $p=2$, $x=0$; $p=2$, $x=1$; $p=2$, $y=0$ and $p=2$, $y=1$, respectively, is given by $$F(2,0,y)=F(2,1,y)=F(2,x,0)=F(2,x,1)=5/18432, \quad x,y \in [0,1].$$
		Evaluating equation (\ref{y}) at $p=0$, we get $F(0,0,y)=l_4(y):=y^2/36$. It is easy to verify that $l_4$ is an increasing function of $y$ and hence attains maximum value at $y=1$ in $[0,1]$. Thus
		\begin{align*}
		F(0,0,y) \leq \dfrac{1}{36}, \quad y \in [0,1].
		\end{align*}
		Using equation (\ref{g4}), we get $F(0,x,1)= l_5(x):=(8-7x^2-x^4)/288$.
		Since $l_5$ is decreasing function in $[0,1]$, it attains maximum value at $x=0$. Thus
		\begin{align*}
		F(0,x,1)\leq \dfrac{1}{36}, \quad x \in [0,1].
		\end{align*}
		Substituting $y=0$ in equation (\ref{g4}), we obtain  $F(0,x,0)= l_6(x):=x(1-x^2)/32.$ A
		simple calculation shows that the function $l_6'(x)=0$ at $x=:x_0=\sqrt{3}/3$ and it is increasing in $(0,x_0)$ and decreasing in $(x_0,1)$. Hence it attains the maximum value at $x=x_0$. Thus we conclude 
		$$ F(0,x,0) \leq \sqrt{3}/144, \quad x \in [0,1].$$
		
		Taking into account all the cases I-III, the inequality $(\ref{h23})$ holds. For the function given in (\ref{fn}), which belongs to the class $\mathcal{SL}^*$, $a_3=a_5=0$ and $a_4=1/6$. Thus $|H_2(3)|=1/36$ for this function, which also proves the result is sharp. This completes the proof.\qedhere
	\end{proof}
	We note that for $n=2$, the exprssion on the left of the inequality (\ref{zalc}) reduces to the famous Fekete-Szeg\"o functional. In the following theorem we obtain the Zalcman coefficient inequality for $n=3$ for the class $\mathcal{SL}^*$.
	\begin{thm}
		Let $ f \in \mathcal{SL}^*.$ Then
		\begin{align*}
		|a_3^2-a_5| \leq \dfrac{1}{8}.
		\end{align*}
		The estimate is sharp. 
	\end{thm}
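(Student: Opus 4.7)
My plan is to express $a_3^2-a_5$ as a polynomial in the Carath\'eodory coefficients $p_1,p_2,p_3,p_4$ and then apply Lemma~\ref{she} directly. Substituting the expressions for $a_3$ from (\ref{a2}) and for $a_5$ from (\ref{a5}) into $a_3^2-a_5$ and collecting terms, I expect to obtain
\[
16(a_3^2-a_5)=\frac{125}{768}p_1^4-\frac{43}{48}p_1^2 p_2+\frac{3}{4}p_2^2+\frac{11}{12}p_1 p_3-p_4,
\]
which is precisely of the form $ap_1^4+dp_2^2+2cp_1p_3-(3/2)bp_1^2p_2-p_4$ appearing in Lemma~\ref{she} with $a=125/768$, $b=43/72$, $c=11/24$, $d=3/4$.

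Next, I would verify that this choice of $(a,b,c,d)$ satisfies the hypotheses of Lemma~\ref{she}. The conditions $0<c<1$ and $0<d<1$ are immediate. For the main inequality, the short computations $cb-2a=-179/3456$, $c(c+d)-b=-25/576$ and $b-2cd=-13/144$, together with $8d(1-d)=3/2$ and $c(1-c)=143/576$, reduce the verification to a single numerical comparison against $4c^2(1-c)^2d(1-d)=4\cdot\tfrac{121\cdot 169}{576^2}\cdot\tfrac{3}{16}$. Once this inequality is in hand, Lemma~\ref{she} gives $|16(a_3^2-a_5)|\leq 2$, i.e.\ $|a_3^2-a_5|\leq 1/8$, as desired.

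For sharpness I would exhibit the candidate extremal function
\[
f_0(z)=z\exp\!\left(\int_0^z\frac{\sqrt{1+t^4}-1}{t}\,dt\right)=z+\frac{1}{8}z^5+\cdots,
\]
which satisfies $zf_0'(z)/f_0(z)=\sqrt{1+z^4}$; since $z\mapsto z^4$ is a Schwarz function we have $\sqrt{1+z^4}\prec\sqrt{1+z}$, so $f_0\in\mathcal{SL}^*$. Reading off the Taylor coefficients yields $a_3=0$ and $a_5=1/8$, hence $|a_3^2-a_5|=1/8$ for $f_0$, matching the upper bound.

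The principal obstacle is not conceptual but one of bookkeeping: correctly handling the least common multiples ($4096$, $6144$, $12288$, $768$) when expanding $a_3^2-a_5$, and presenting it in exactly the normalized form required by Lemma~\ref{she} (with the specific factor $-3/2$ in front of $bp_1^2p_2$ and coefficient $-1$ on $p_4$). The subsequent verification of the lemma's numerical hypothesis, while tedious, is then entirely mechanical.
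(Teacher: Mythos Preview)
Your proposal is correct and follows essentially the same route as the paper's proof: express $16(a_3^2-a_5)$ in the Carath\'eodory coefficients, identify the parameters $a=125/768$, $b=43/72$, $c=11/24$, $d=3/4$, apply Lemma~\ref{she}, and use the same extremal function $f_0$ with $zf_0'(z)/f_0(z)=\sqrt{1+z^4}$ for sharpness. The only difference is cosmetic---you spell out the numerical verification of the lemma's hypothesis, which the paper leaves implicit.
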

	\begin{proof}
		Using equation (\ref{a2}) and (\ref{a5}), we get
		\begin{align}\label{zal}
		a_3^2-a_5 = \dfrac{125}{12288} p_1^4-\dfrac{43}{768} p_1^2 p_2
		+\dfrac{3}{64}p_2^2+\dfrac{11}{192}p_1 p_3-\dfrac{1}{16} p_4.\end{align}
		Applying Lemma~\ref{she} with $a=125/768$, $b=43/72$, $c=11/24$ and $d=3/4$ in the equation (\ref{zal}), we get
		\begin{align*}
		|a_3^2-a_5|\leq \dfrac{1}{8}.
		\end{align*}
		Let the function $f:\Delta \rightarrow \mathbb{C}$, be defined as follows:
		\begin{align}\label{ext}
		f(z)=z \exp\left(\int_{0}^{z}\dfrac{\sqrt{1+t^4}-1}{t}dt
		\right)=z+\dfrac{z^5}{8}+\cdots.
		\end{align}
		The equality holds for the function given in (\ref{ext}), which belong to $\mathcal{SL}^*$ as $a_3=0$ and $a_5=1/8$, which contributes to the sharpness of the inequality. This completes the proof.\qedhere
	\end{proof}
	\section{Further Results}
	Let $f$ and $g$ be analytic functions of the form, respectively
	\begin{align*}
	f(z)=z + \sum_{n=2}^{\infty} a_n z^n \quad \text{ and } \quad g(z)=z+\sum_{n=2}^{\infty} b_n z^n.
	\end{align*}
	Then the Hadamard product (or convolution) of $f(z)$ and $g(z)$ is defined by
	\begin{align*}
	(f*g)(z) = z+\sum_{n=2}^{\infty} a_n b_n z^n.
	\end{align*}
	Now, we derrive the necessary and sufficient condition for a function $f \in \mathcal{S}$ to belong to the class $\mathcal{SL}^*$ in the following theorem, involving the convolution concept.
	\begin{thm}
		A function $f \in \mathcal{S}$ is in the class $\mathcal{SL}^*$ if and only if
		\begin{equation}\label{conv}
		\dfrac{1}{z}\left(f * H_t(z)\right)\neq 0, \quad (z \in \Delta)
		\end{equation}
		where $$H_t(z)= \dfrac{z}{(1-z)(1-S(t))}\left(\dfrac{1}{1-z}-S(t)\right)$$ and $$S(t)= \sqrt{t} + i\left(\pm \sqrt{\sqrt{1+4t}-(t+1)}\right),  \quad  (0 <t<2).$$
	\end{thm}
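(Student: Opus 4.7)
The plan is to convert the subordination defining $\mathcal{SL}^{*}$ into the statement that $g(z):=zf'(z)/f(z)$ avoids every point of the lemniscate boundary $\gamma_{1}$, to parametrize $\gamma_{1}$ by $S(t)$, and finally to rewrite that avoidance as the non-vanishing condition (\ref{conv}) via the classical convolution identities
\begin{equation*}
f*\frac{z}{1-z}=f(z), \qquad f*\frac{z}{(1-z)^{2}}=zf'(z).
\end{equation*}

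I would first note that, since $f\in\mathcal{S}$, the derivative $f'$ does not vanish, so $g$ is analytic on $\Delta$ with $g(0)=1$ and $g(z)\neq 0$ throughout. The map $z\mapsto\sqrt{1+z}$ is univalent on $\Delta$ with image the open right loop $\Omega$ bounded by $\gamma_{1}$. Because $g(0)=1\in\Omega$ and $g(\Delta)$ is connected, the subordination $g\prec\sqrt{1+z}$ is equivalent to the purely geometric condition $g(\Delta)\cap\gamma_{1}=\emptyset$. Next I would parametrize $\gamma_{1}$: writing $w=u+iv$ with $u\geq 0$, the equation $(u^{2}+v^{2})^{2}=2(u^{2}-v^{2})$ becomes, after the substitution $u=\sqrt{t}$, a quadratic in $v^{2}$ whose nonnegative root is $v^{2}=\sqrt{1+4t}-(t+1)$. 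This is $\geq 0$ exactly for $0\leq t\leq 2$, so for $0<t<2$ the two boundary points above $u=\sqrt{t}$ are precisely the two branches of $S(t)$, while the excluded endpoints $t=0$ and $t=2$ correspond to the cusp $0$ and the vertex $\sqrt{2}$ of $\gamma_{1}$.

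Then I would carry out the convolution computation. Using the two identities above, a short manipulation rewrites the stated $H_{t}$ in the form
\begin{equation*}
H_{t}(z)=\frac{1}{1-S(t)}\left[\frac{z}{(1-z)^{2}}-S(t)\,\frac{z}{1-z}\right],
\end{equation*}
which immediately gives $(f*H_{t})(z)=(zf'(z)-S(t)f(z))/(1-S(t))$. For $0<t<2$ one has $\operatorname{Im}S(t)\neq 0$, so $S(t)\neq 1$ and the denominator is harmless. Since $f$ is univalent, $f(z)\neq 0$ on $\Delta\setminus\{0\}$, and hence the zeros of $f*H_{t}$ in $\Delta\setminus\{0\}$ are exactly the preimages of $S(t)$ under $g$. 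The quotient $(f*H_{t})(z)/z$ extends analytically to $\Delta$ with value $1$ at the origin, so the condition (\ref{conv}) is equivalent to the assertion that $g(z)\neq S(t)$ for every $z\in\Delta$ and every $t\in(0,2)$ (with either sign choice).

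The main technical hurdle will be the two endpoints of the parametrization, since the cusp $0$ and the vertex $\sqrt{2}$ are not of the form $S(t)$ for $t\in(0,2)$. The cusp is immediate: $g$ cannot equal $0$ because $f'$ never vanishes. For the vertex, I would argue that if $g(z_{0})=\sqrt{2}$ for some $z_{0}\in\Delta$, then $g$ is non-constant (as $g(0)=1$), so the open mapping theorem places a full neighbourhood of $\sqrt{2}$ inside $g(\Delta)$; that neighbourhood meets $\gamma_{1}$ along points $S(t)$ with $t$ slightly less than $2$, contradicting (\ref{conv}). Combining these two endpoint cases with the connectedness argument from the first step closes the equivalence $g(\Delta)\cap\gamma_{1}=\emptyset\Longleftrightarrow\eqref{conv}$, and hence $f\in\mathcal{SL}^{*}\Longleftrightarrow\eqref{conv}$.
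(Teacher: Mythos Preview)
Your proof is correct and follows essentially the same route as the paper's: parametrize the right loop of $\gamma_{1}$ by $S(t)$, use the convolution identities $f*\tfrac{z}{1-z}=f$ and $f*\tfrac{z}{(1-z)^{2}}=zf'$ to rewrite $(f*H_{t})(z)/z$ as a nonzero multiple of $zf'(z)/f(z)-S(t)$, and identify the non-vanishing condition with the subordination defining $\mathcal{SL}^{*}$. Your argument is in fact more careful than the paper's, which leaves implicit both the equivalence ``$g\prec\sqrt{1+z}\Longleftrightarrow g(\Delta)\cap\gamma_{1}=\emptyset$'' and the endpoint cases $t=0$ and $t=2$ that your open-mapping step handles.
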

	\begin{proof}
		Define $p(z)=z f'(z)/f(z)$. As we know $p(0)=1$, to prove the result, it suffices to show that $f \in \mathcal{SL}^*$ if and only if $p(z) \notin \gamma_1$, where 
		$$ \gamma_1 =\{ (u^2+v^2)^2-2(u^2-v^2)=0\}.$$
		By taking $u^2=t$, we can give the parametric representation of the curve $\gamma_1$ as follows
		$$ S(t)= \sqrt{t} + i
		\left(\pm \sqrt{\sqrt{1+4t}-(t+1)}\right), \quad (0 < t<2).$$
		For $f \in \mathcal{S}$, we have
		\begin{align}\label{convo}
		\dfrac{z}{(1-z)^2}*f(z)=z f'(z) \quad \text{ and } \quad \dfrac{z}{1-z}*f(z)=f(z).
		\end{align}
		Using the above equations (\ref{conv}) and (\ref{convo}), we get $$ \dfrac{1}{z}\left(f * H_t(z)\right)= \dfrac{f(z)}{z (1-S(t))}\left(\dfrac{z f'(z)}{f(z)}- S(t)\right) \neq 0,$$ which clearly shows that $z f'(z)/f(z) \neq S(t)$. Hence $1/(z (f * H_t(z))) \neq 0$ if and only if $p(z) \notin \gamma_1$ if and only if $f \in \mathcal{SL}^*$.\qedhere
	\end{proof}
	\begin{thm}
		The function
		\begin{align*}
		\Theta (z)= \dfrac{z}{1-\alpha z}, \quad (z \in \Delta)
		\end{align*}
		belongs to the class $\mathcal{SL}^*$ if $|\alpha| \leq 1/4$.
	\end{thm}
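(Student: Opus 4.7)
The plan is to verify the analytic characterization of $\mathcal{SL}^*$ stated in the introduction: $f \in \mathcal{SL}^*$ if and only if $\omega(z) := zf'(z)/f(z)$ satisfies $|\omega(z)^2 - 1| < 1$ on $\Delta$. I would apply this criterion directly to $\Theta(z) = z/(1-\alpha z)$, bypassing any subordination machinery.

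First I would compute $\Theta'(z) = 1/(1-\alpha z)^2$, so that $\omega(z) = z\Theta'(z)/\Theta(z) = 1/(1-\alpha z)$, which is well-defined on $\Delta$ because $|\alpha z| \leq |\alpha| \cdot |z| < 1$. A short algebraic manipulation then gives
\begin{equation*}
\omega(z)^2 - 1 = \frac{1}{(1-\alpha z)^2} - 1 = \frac{\alpha z(2 - \alpha z)}{(1-\alpha z)^2}.
\end{equation*}
Writing $w := \alpha z$, the problem reduces to proving $|w(2-w)| < |1-w|^2$ for every $w$ of the form $\alpha z$ with $|\alpha| \leq 1/4$ and $z \in \Delta$; note that such $w$ automatically satisfies $|w| < 1/4$ strictly.

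The key estimate I would invoke is the pair of triangle-inequality bounds $|w(2-w)| \leq |w|(2+|w|)$ and $|1-w|^2 \geq (1-|w|)^2$. Comparing the two sides,
\begin{equation*}
(1-|w|)^2 - |w|(2+|w|) = 1 - 4|w|,
\end{equation*}
so $|w|(2+|w|) \leq (1-|w|)^2$ precisely when $|w| \leq 1/4$, with strict inequality whenever $|w| < 1/4$. Since the hypothesis forces $|w| < 1/4$ throughout $\Delta$, we obtain $|\omega(z)^2 - 1| < 1$ on $\Delta$, so $\Theta \in \mathcal{SL}^*$.

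The whole argument reduces to a direct calculation together with one triangle-inequality comparison, so there is no substantive obstacle; the only point worth flagging is the strict inequality at the endpoint $|\alpha| = 1/4$, which is secured by $|z| < 1$ rather than by slack in the estimates themselves.
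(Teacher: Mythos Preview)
Your argument is correct and follows essentially the same route as the paper: compute $z\Theta'(z)/\Theta(z)=1/(1-\alpha z)$, reduce the condition $|\omega^2-1|<1$ to $|\alpha z(2-\alpha z)|<|1-\alpha z|^2$, and then bound both sides via the triangle inequality to arrive at $4|\alpha z|<1$. The only cosmetic difference is that the paper expands $|1-\alpha z|^2=1+|\alpha z|^2-2\operatorname{Re}(\alpha z)$ before applying $\operatorname{Re}(\alpha z)\le |\alpha z|$, whereas you use the equivalent bound $|1-w|\ge 1-|w|$ directly; your handling of the strict inequality at $|\alpha|=1/4$ is also made more explicit than in the paper.
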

	\begin{proof}
		By the definition of the class $\mathcal{SL}^*$, it suffices to show that the following inequality holds for the given range of $\alpha$.
		\begin{align}\label{alpha}
		\left|\left(\dfrac{1}{1-\alpha z}\right)^2-1\right| <1.
		\end{align}
		The above inequality (\ref{alpha}) holds\\ whenever
		\begin{align*}
		|2 \alpha z - \alpha^2 z^2| < 1+ |\alpha z|^2 -2 Re(\alpha z),
		\end{align*}
		which in turn holds if \begin{align*}2|\alpha z| \leq 1- 2 |\alpha z|,
		\end{align*} which holds if \begin{align*}
		|\alpha| \leq \dfrac{1}{4}.
		\end{align*}
		Hence the function $\Theta(z) \in \mathcal{SL}^*$.\qedhere
	\end{proof}
\noindent \textbf{Acknowledgements.} The work presented here was supported by a Research Fellowship
		from the Department of Science and Technology, New Delhi.
	

\begin{thebibliography}{99}
		\bibitem{ali2012} R. M. Ali, N. E. Cho\ and\ V. Ravichandran, Differential subordination for functions associated with the lemniscate of Bernoulli, Taiwanese J. Math. {\bf 16} (2012), no.~3, 1017--1026.
		\bibitem{aliravi2012} R. M. Ali, N. K. Jain\ and\ V. Ravichandran, Radii of starlikeness associated with the lemniscate of Bernoulli and the left-half plane, Appl. Math. Comput. {\bf 218} (2012), no.~11, 6557--6565. 
		\bibitem{bansal2013} D. Bansal, Upper bound of second Hankel determinant for a new class of analytic functions, Appl. Math. Lett. {\bf 26} (2013), no.~1, 103--107. 
		\bibitem{bieberbach1986}J. E. Brown\ and\ A. Tsao, On the Zalcman conjecture for starlike and typically real functions, Math. Z. {\bf 191} (1986), no.~3, 467--474. 
		\bibitem{diens1957} P. Dienes, {\it The Taylor series: an introduction to the theory of functions of a complex variable}, Dover Publications, Inc., New York, 1957. 
		\bibitem{bieberbach1983} A. W. Goodman, {\it Univalent Functions}, Mariner, Tampa, Florida, 1983
		\bibitem{hayami2010}T. Hayami\ and\ S. Owa, Generalized Hankel determinant for certain classes, Int. J. Math. Anal. (Ruse) {\bf 4} (2010), no.~49-52, 2573--2585. 
		\bibitem{hayman1968}W. K. Hayman, On the second Hankel determinant of mean univalent functions, Proc. London Math. Soc. (3) {\bf 18} (1968), 77--94. 
		\bibitem{janteng2007} A. Janteng, S. A. Halim\ and\ M. Darus, Hankel determinant for starlike and convex functions, Int. J. Math. Anal. (Ruse) {\bf 1} (2007), no.~13-16, 619--625.
		\bibitem{kowalczyk2018}B. Kowalczyk, A. Lecko, M. Lecko\ and\ Y. J. Sim, The sharp bound of the third Hankel determinant for some classes of analytic functions, Bull. Korean Math. Soc. {\bf 55} (2018), no.~6, 1859--1868. 
		\bibitem{kowalczyk22018} B. Kowalczyk, A. Lecko\ and\ Y. J. Sim, The sharp bound for the Hankel determinant of the third kind for convex functions, Bull. Aust. Math. Soc. {\bf 97} (2018), no.~3, 435--445. 
		\bibitem{krishna2015} D. V. Krishna, B. Venkateswarlu\ and\ T. RamReddy, Third Hankel determinant for bounded turning functions of order alpha, J. Nigerian Math. Soc. {\bf 34} (2015), no.~2, 121--127.
		\bibitem{kumar2013}S. S. Kumar, V. Kumar, V. Ravichandran\ and\ N. E. Cho, Sufficient conditions for starlike functions associated with the lemniscate of Bernoulli, J. Inequal. Appl. {\bf 2013}, 2013:176, 13 pp. 
		\bibitem{kwon2018} O. S. Kwon, A. Lecko\ and\ Y. J. Sim, On the fourth coefficient of functions in the Carath\'{e}odory class, Comput. Methods Funct. Theory {\bf 18} (2018), no.~2, 307--314.
		\bibitem{kwon2019}O. S. Kwon, A. Lecko\ and\ Y. J. Sim, The bound of the Hankel determinant of the third kind for starlike functions, Bull. Malays. Math. Sci. Soc. {\bf 42} (2019), no.~2, 767--780.
		\bibitem{lee2013}S. K. Lee, V. Ravichandran\ and\ S. Supramaniam, Bounds for the second Hankel determinant of certain univalent functions, J. Inequal. Appl. {\bf 2013}, 2013:281, 17 pp. 
		\bibitem{libera1982}R. J. Libera\ and\ E. J. Z\l otkiewicz, Early coefficients of the inverse of a regular convex function, Proc. Amer. Math. Soc. {\bf 85} (1982), no.~2, 225--230.
		\bibitem{noor1983}K. I. Noor, Hankel determinant problem for the class of functions with bounded boundary rotation, Rev. Roumaine Math. Pures Appl. {\bf 28} (1983), no.~8, 731--739. 
		\bibitem{pommerenke1966} C. Pommerenke, On the coefficients and Hankel determinants of univalent functions, J. London Math. Soc. {\bf 41} (1966), 111--122. 
		\bibitem{shelly2015}V. Ravichandran\ and\ S. Verma, Bound for the fifth coefficient of certain starlike functions, C. R. Math. Acad. Sci. Paris {\bf 353} (2015), no.~6, 505--510. 
		\bibitem{raza2013}M. Raza\ and\ S. N. Malik, Upper bound of the third Hankel determinant for a class of analytic functions related with lemniscate of Bernoulli, J. Inequal. Appl. {\bf 2013}, 2013:412, 8 pp.
		\bibitem{sokol2009} J. Sok\'{o}\l, Coefficient estimates in a class of strongly starlike functions, Kyungpook Math. J. {\bf 49} (2009), no.~2, 349--353.
		\bibitem{sokol22009}J. Sok\'{o}\l, Radius problems in the class ${\mathscr{SL}}^*$, Appl. Math. Comput. {\bf 214} (2009), no.~2, 569--573.
		\bibitem{sokol1996}J. Sok\'{o}\l\ and\ J. Stankiewicz, Radius of convexity of some subclasses of strongly starlike functions, Zeszyty Nauk. Politech. Rzeszowskiej Mat. No. 19 (1996), 101--105. 
		\bibitem{sudharshan2014} T. V. Sudharsan, S. P. Vijayalaksmi, B. A. Sthephen, Third Hankel determinant for a subclass of analytic functions, Malaya J. Math. {\bf 2} (2014), no.~4, 438-444.
		\bibitem{zaprawa2016}P. Zaprawa, Second Hankel determinants for the class of typically real functions, Abstr. Appl. Anal. {\bf 2016}, Art. ID 3792367, 7 pp.
		\bibitem{zaprawa2017}P. Zaprawa, Third Hankel determinants for subclasses of univalent functions, Mediterr. J. Math. {\bf 14} (2017), no.~1, Art. 19, 10 pp. 
		\bibitem{zaprawa2018} P. Zaprawa, On Hankel determinant $H_2(3)$ for univalent functions, Results Math. {\bf 73} (2018), no.~3, Art. 89, 12 pp. 
	\end{thebibliography}
\end{document}